\newcommand{\ols}{$\omega$-limit set\xspace}
\newcommand{\real}{\mathbb{R}\xspace}
\newtheorem{theorem}{Theorem}
\newtheorem{corollary}{Corollary}[theorem]
\newtheorem{example}{Example}
\newtheorem{remark}{Remark}
\begin{document}

\title[A new description of the Lorenz attractor]{The $\omega$-limit set in a positively invariant compact region\\ and a new description of the Lorenz attractor}

\author{Khalil Zare}
 \affiliation{Retired aerospace engineer and mathematician, Austin, TX, USA \texttt{kalzare@yahoo.com}}

\author[0000-0003-3240-6497]{Steven R. Chesley}
 \affiliation{Jet Propulsion Laboratory, California Institute of Technology, \\
 4800 Oak Grove Dr., Pasadena, CA 91109, USA}


\date{\today}

\begin{abstract}
The $\omega$-limit set in a compact positively invariant region $R \subset \mathbb{R}^n$ has been identified for $n=1$, 2, and 3, with examples in each case. It has been shown that the $\omega$-limit set becomes more complex as $n$ increases from 1 to 3, and we expect this to also be true for $n>3$. Our example for $n=3$ is the {\em Lorenz equations}, for which we have shown that its $\omega$-limit set is a {\em twisted torus}.
\end{abstract}

\section{Introduction}

A solution of differential equations may be considered as a {\em flow} in $\real^n$. If all the flows in or entering a region $R$ remain in $R$ indefinitely as $t\to\infty$ then $R$ is called a {\em positively invariant region}.

The {\em \ols} introduced by  \citet{birkhoff1927} consists of those flows in $R$ that attract the other flows in $R$ as $t\to\infty$. Examples are asymptotically stable equilibria and limit cycles.

In addition, if $R$ is {\em compact} (i.e., closed and bounded) then the \ols of a flow in $R$ is {\em non-empty}, {\em compact}, {\em invariant}, and {\em connected} (see Theorem~\ref{thm:first} in Sec.~\ref{sec:ols_compact}). 

In this paper, we show that in a compact, positively-invariant region $R\subset\real^n$, the \ols can contain 
\begin{enumerate}
  \item [(i)] only {\em equilibria} if $n=1$,\label{list:n1}
  \item [(ii)]\label{list:n2} {\em equilibria}, {\em limit cycles}, and {\em heteroclinic} ({\em homoclinic}) orbits if $n=2$
  \item [(iii)] {\em equilibria}, {\em limit cycles},  {\em heteroclinic} ({\em homoclinic}) orbits, and a surface topologically equivalent to a {\em single or multiple torus} if $n=3$.\label{list:n3}
\end{enumerate}
Notice that the \ols becomes more complex as $n$ increases from 1 to 3. We expect this to be also true for $n>3$, which is not treated in this paper, but left for future studies.

It follows from (ii) that if the region $R$ does not contain any equilibrium then there is a limit cycle in $R$. This is an alternative proof of the {\em Poincar\'e-Bendixson Theorem}.

It follows from (iii) that a lack of an equilibrium in $R$ does not necessarily imply the existence of a limit cycle since the solutions may approach the surface of a torus. This explains why the Poincar\'e-Bendixson Theorem cannot be extended to three dimensions.

To give a three dimensional example, we chose the Lorenz equations concerning the stability of fluid flows in the atmosphere. Using numerical integration, we have shown that the \ols in this case is a twisted torus.

\section{The \texorpdfstring{$\omega$}{omega}-limit set in a compact positively invariant region}\label{sec:ols_compact}

Let $R$ be an open subset of $\real^n$ and 
$$
\vec{f}:R \to \real^n
$$
be a function for which 
\begin{equation}
\frac{d\vec{y}}{dt} = \vec{f}(\vec{y}),\; \vec{y}(0) = \vec{y}_0 \in R   
\end{equation}
has a unique solution $\vec{\Phi}(t, \vec{y}_0)$ that is defined for all $t\in\real$.

Under these assumptions, it follows that 
\begin{equation}
    \vec\Phi(t+\tau, \vec y_0) = \vec\Phi(t, \vec\Phi(\tau, \vec y_0)).
\end{equation}

\noindent{\bf Definition}: The \ols of $\vec y_0 \in R$ is the set 
\begin{multline*}
L_\omega(\vec y_0) = \{ \vec y \in R: \text{there exists an unbounded} \\\text{sequence}\ \{t_k\}\ \text{such that}\lim\ \vec\Phi(t_k, \vec y_o) = \vec y\}. 
\end{multline*}

To discuss the \ols, we use the following theorem from analysis:
\begin{theorem}\label{thm:first}
If the region $R$ is compact (i.e., closed and bounded) and {\em positively-invariant}, then the \ols of a trajectory in $R$ is non-empty, compact, invariant, and connected.
\end{theorem}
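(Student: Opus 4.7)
The plan is to establish the four properties one at a time, in the order non-empty, compact, invariant, connected, since each leans on the previous. For non-emptiness, I would pick any sequence $t_k \to \infty$, consider $\vec\Phi(t_k, \vec y_0) \in R$, and invoke Bolzano–Weierstrass on the compact set $R$ to extract a convergent subsequence whose limit, by definition, lies in $L_\omega(\vec y_0)$.

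For compactness, boundedness is immediate from $L_\omega(\vec y_0) \subseteq R$. Closedness follows from a standard diagonal argument: given $\vec y_m \in L_\omega(\vec y_0)$ with $\vec y_m \to \vec y$, pick times $t_m \to \infty$ with $\|\vec\Phi(t_m, \vec y_0) - \vec y_m\| < 1/m$, and conclude $\vec\Phi(t_m, \vec y_0) \to \vec y$. Invariance follows directly from continuity of the flow and the semigroup identity stated in the excerpt: if $\vec\Phi(t_k, \vec y_0) \to \vec y$ with $t_k \to \infty$, then for any fixed $\tau$, $\vec\Phi(t_k + \tau, \vec y_0) = \vec\Phi(\tau, \vec\Phi(t_k, \vec y_0)) \to \vec\Phi(\tau, \vec y)$, and $t_k + \tau \to \infty$, so $\vec\Phi(\tau, \vec y) \in L_\omega(\vec y_0)$.

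The main obstacle is connectedness, which I would prove by contradiction. Suppose $L_\omega(\vec y_0) = A \cup B$ with $A, B$ nonempty, disjoint, and closed. Since $L_\omega(\vec y_0)$ is compact, so are $A$ and $B$, and hence $\delta := \mathrm{dist}(A, B) > 0$. By definition, the trajectory $\vec\Phi(t, \vec y_0)$ comes within $\delta/3$ of $A$ for arbitrarily large $t$, and likewise within $\delta/3$ of $B$. Continuity of $t \mapsto \vec\Phi(t, \vec y_0)$ and the intermediate value theorem applied to the real-valued function $t \mapsto \mathrm{dist}(\vec\Phi(t, \vec y_0), A)$ then produce an unbounded sequence $\{s_k\}$ at which this distance equals exactly $\delta/2$, so $\vec\Phi(s_k, \vec y_0)$ lies outside the $\delta/3$-neighborhoods of both $A$ and $B$. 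Extracting a convergent subsequence (again by compactness of $R$) yields a limit point in $L_\omega(\vec y_0)$ that is at distance $\delta/2$ from $A$, hence in neither $A$ nor $B$, a contradiction.

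I expect the first three properties to be essentially formal bookkeeping with the definitions, while connectedness requires the extra geometric idea of forcing the trajectory through the gap between the two hypothetical components; this is where care is needed to ensure the constructed sequence is unbounded and that the intermediate points have a limit outside $A \cup B$.
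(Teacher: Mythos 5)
Your proposal is correct and, for non-emptiness and connectedness, follows the same strategy as the paper: Bolzano--Weierstrass on a sequence of trajectory points in the compact set $R$ for non-emptiness, and for connectedness a contradiction argument that forces the trajectory to pass repeatedly through the ``gap'' between the two pieces of a hypothetical disconnection and then extracts a limit point of the trajectory lying outside both pieces. Where you genuinely add value is in the two places the paper is thin. First, the paper's compactness step only remarks that closed subsets of compact sets are compact, without verifying that $L_\omega(\vec y_0)$ is closed; your diagonal argument supplies exactly the missing step. Second, the paper asserts invariance in the statement but never proves it; your two-line argument via the semigroup identity $\vec\Phi(t_k+\tau,\vec y_0)=\vec\Phi(\tau,\vec\Phi(t_k,\vec y_0))$ and continuity of the flow is the standard and correct way to close that gap. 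Your connectedness argument is also executed more carefully than the paper's: by separating the two closed pieces $A$ and $B$ by a positive distance $\delta$ and applying the intermediate value theorem to $t\mapsto\mathrm{dist}(\vec\Phi(t,\vec y_0),A)$, you make rigorous the claim that the trajectory must leave a neighborhood of $A\cup B$ --- a point the paper's open-set version ($U$, $V$ with $\Phi(\tau,y)\in R\setminus(U\cup V)$) leaves implicit, since it never states that $U$ and $V$ are disjoint nor explains why the connecting arc cannot remain inside $U\cup V$. The metric formulation costs nothing in generality here (everything lives in $\real^n$) and buys a cleaner justification of the key step.
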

\begin{proof}
    For a point $\vec y \in R$ the sequence $\vec\Phi(n,\vec y)$ is contained in the compact set R. By the Bolzano-Weierstrass theorem, this sequence contains a convergent sub-sequence whose limit is, by definition, an element of $L_\omega(\vec y)$. The compactness of $L_\omega(\vec y)$ follows from the fact that all closed subsets of a compact set are also compact.

    To verify that $L_\omega(\vec y)$ is connected, we assume, by contradiction, the existence of two open sets $U$ and $V$ such that 
\begin{gather*}    
L_\omega(\vec y) \cap U \ne \varnothing \\
L_\omega(\vec y) \cap V \ne \varnothing \\
L_\omega(\vec y) \subset U \cup V.
\end{gather*}

Let $\{t_k\} \subset [0,\infty)$ such that $\lim_{k \to \infty}t_k = \infty$, with $\Phi(t_{2k}, y) \in U$ and $\Phi(t_{2k+1}, y) \in V$, for each $k$.

Now the continuous curve 
\begin{equation*}
    \{ \Phi(t,y): t_{2k} < t < t_{2k+1} \}
\end{equation*}
connects a point in $U$ with a point in $V$ and there must be a time $\tau \in (t_{2k}, t_{2k+1})$ such that 
\begin{equation*}
    \Phi(\tau,y) \in R \setminus (U\cup V).
\end{equation*}
By compactness of $R \setminus (U\cup V)$ the sequence $\Phi(\tau, y)$ contains a sub-sequence whose limit is not in $U\cup V$.
\end{proof}

\begin{remark}
In the absence of compactness and positive invariance, the conclusions in Theorem~\ref{thm:first} fail to hold. For examples, in the ternary Cantor set, Smale horseshoe map, and the Poincare’ map of the isosceles three-body problem \citep{zare_chesley_1998},
R is not positively invariant and Theorem 1 does not apply.
\end{remark}

\section{The \texorpdfstring{$\omega$}{omega}-limit set in \texorpdfstring{$\mathbb{R}^n$}{R**n} for \texorpdfstring{$n\le3$}{n<=3}}\label{sec:ols_Rn}

\subsection{Possibilities with \texorpdfstring{$n=1$}{n=1} dimension}
This is a trivial case where the compact region $R$ is a closed interval and the \ols in $R$ is either a point (i.e., an equilibrium) or a closed sub-interval $R_1$ of $R$. By replacing $R$ by $R_1$, we have either an equilibrium in $R_1$ or a closed sub-interval $R_2$ of $R_1$. By repeating this argument we conclude that the \ols in $\real$ are {\em equilibrium points}.

\begin{example}\label{ex:first}
Let 
\begin{align*}
f(y) &= -y^5 + 15 y^4 -85 y^3 +225 y^2 - 274 y + 120\\
     &= (1-y)(2-y)(3-y)(4-y)(5-y)
\end{align*}
and $R=[0,6]$. As illustrated in Fig.~\ref{fig:n_one}, the \ols in $R$ are the three stable equilibria at $y=1$, $3$, and $5$.
\end{example}

\begin{figure}
\includegraphics[width=0.6\textwidth]{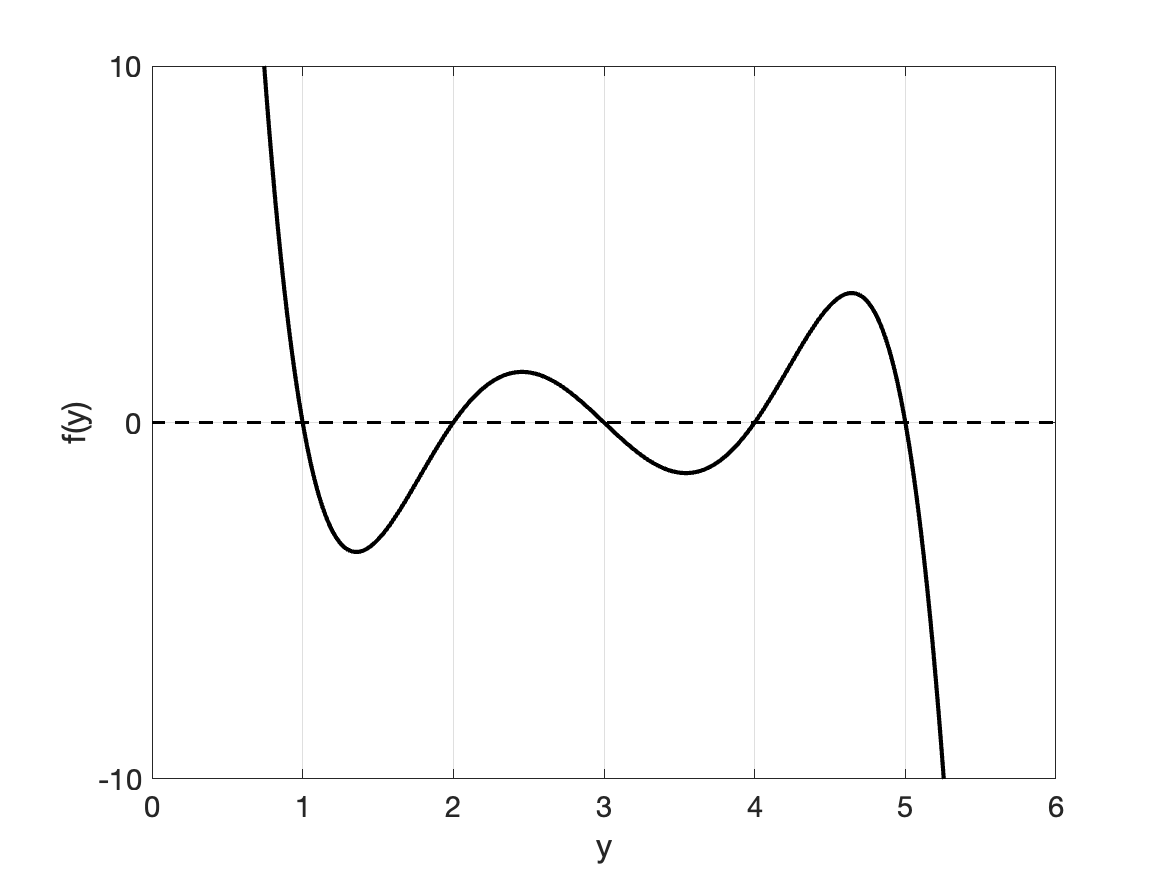}
\caption{\label{fig:n_one} The equilibria in the compact positively-invariant interval $R=[0,6]$ for Example~\ref{ex:first}.}
\end{figure}

\subsection{Possibilities with \texorpdfstring{$n=2$}{n=2} dimensions}

In this case the compact and positively-invariant region $R$ is a simply or multiply {\em connected area} in $\real^2$.
\begin{theorem}\label{thm:second}
The \ols in a compact and positively invariant region $R\subset\real^2$ can include only 
\begin{enumerate}
    \item equilibrium points,
    \item periodic orbits (i.e., limit cycles), and
    \item heteroclinic or homoclinic orbits
\end{enumerate}
\end{theorem}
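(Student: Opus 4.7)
The plan is to combine Theorem~\ref{thm:first} with the classical two-dimensional tools of Poincar\'e and Bendixson---transversal sections, the Jordan curve theorem, and the resulting monotonicity of successive intersections of an orbit with a transversal. Planarity enters the argument only at this monotonicity step, which is exactly why the statement fails to extend to $n\ge 3$.

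First I would fix a trajectory starting from $\vec y_0 \in R$ and set $L = L_\omega(\vec y_0)$. By Theorem~\ref{thm:first}, $L$ is non-empty, compact, invariant, and connected. Pick any $p \in L$; by invariance the whole orbit through $p$ lies in $L$. The theorem is then established by a case analysis on this orbit: if $p$ is an equilibrium we are in case~(1); if the orbit through $p$ is closed we are in case~(2). The substance of the proof lies in showing that in the remaining case the orbit through $p$ must be a heteroclinic or homoclinic connection.

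For that non-trivial case I would consider $L_\omega(p)$ and $L_\alpha(p)$, both of which lie in $L$ by invariance. The key intermediate claim is: if $q \in L_\omega(p)$ is not an equilibrium, then the orbit through $p$ is periodic, contradicting our assumption. To prove the claim I would erect a local transversal section $\Sigma$ through $q$, observe that the orbit through $p$ meets $\Sigma$ in a sequence converging to $q$, and then invoke the Jordan-curve monotonicity lemma: the closed curve formed by an arc of orbit between two successive crossings together with the chord on $\Sigma$ separates $\real^2$ into two components and traps the flow on one side, forcing successive crossings to be monotone along $\Sigma$. A second application of monotonicity shows that $L \cap \Sigma$ can contain at most one point, so the orbit of $p$ can meet $\Sigma$ only at $q$ itself, which forces periodicity.

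Granted the claim, and assuming as is standard that the equilibria in $R$ are isolated, both $L_\omega(p)$ and $L_\alpha(p)$ consist entirely of equilibria; combined with their connectedness (Theorem~\ref{thm:first} applied to the orbit of $p$) each reduces to a single equilibrium point. Hence the orbit through $p$ is a heteroclinic connection, or a homoclinic loop if the two equilibria coincide, which is case~(3). The main obstacle is the monotonicity-of-crossings step: it is the only part of the argument that genuinely uses the Jordan curve theorem, and it is precisely this step that breaks down in $\real^3$, consistent with the richer $\omega$-limit sets---including tori and the Lorenz example---described later in the paper.
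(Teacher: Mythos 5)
Your proposal is correct in substance, but it follows a genuinely different route from the paper. You give the classical Poincar\'e--Bendixson argument: fix an orbit in $L=L_\omega(\vec y_0)$, erect a transversal section through a regular point of its limit set, and use the Jordan curve theorem to force monotonicity of successive crossings, from which the trichotomy (equilibrium, periodic orbit, hetero/homoclinic connection) follows. The paper instead argues top-down: it invokes Theorem~\ref{thm:first} to get a nonempty, compact, connected, invariant set, then attempts to enumerate the compact connected subsets of $\real^2$ (points, closed cycles of regular points, closed cycles containing irregular points, or two-dimensional regions) and discards the last case by an iterated replacement $R\to R_1\to R_2\to\cdots$. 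What your approach buys is rigor and a precise localization of where planarity enters --- the Jordan-curve monotonicity lemma is exactly the step that fails in $\real^3$, which dovetails with the paper's later discussion of tori and the Lorenz attractor; the cost is that the argument is specific to dimension two. What the paper's approach buys is a template that it reuses verbatim for $n=3$ (Theorem~\ref{thm:fourth}, where case d) adds toroidal surfaces); the cost is that the enumeration of compact connected invariant sets and the ``repeat the argument'' reduction are asserted rather than proved. Two small points on your write-up: the hypothesis that equilibria are isolated (or finite in number) is genuinely needed for the final step and is not stated in the theorem, so you are right to flag it explicitly; and in the non-periodic case you should note that the conclusion ``$L\cap\Sigma$ contains at most one point'' comes from applying monotonicity to the \emph{original} orbit of $\vec y_0$, not to the orbit of $p$ --- that is the second application of the lemma you allude to, and it is worth making explicit.
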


\begin{proof}
The \ols of a trajectory in $R\subset\real^2$ is {\em nonempty}, {\em compact}, {\em invariant} and {\em connected} (see Theorem~\ref{thm:first}).The compact and connected sets in $R\subset\real^2$ are 
\begin{enumerate}[label=\alph*)]
    \item points
    \item closed cycles made of regular points, 
    \item closed cycles made of regular and irregular points, and
    \item a simply or multiply connected area $R_1\subset R$.
\end{enumerate}
However, in the case of d), by replacing $R$ with $R_1$, we have in $R_1$ the cases a), b) and c) or an area $R_2$. Repeating this argument leads to the conclusion that the compact and connected sets in $R$ are only a), b), and c). Since the \ols is also invariant, a), b), and c) are equivalent to 1, 2 and 3 in the statement of the theorem.
\end{proof}

\begin{corollary}\label{cor:second}
    If there is no equilibrium in $R\subset\real^2$ then there exists a limit cycle in $R$.
\end{corollary}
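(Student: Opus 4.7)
The plan is to deduce the corollary directly from Theorem~\ref{thm:second} by a short case analysis. Since $R$ is compact and positively invariant, I would pick an arbitrary initial condition $\vec y_0 \in R$ and consider its \ols $L_\omega(\vec y_0)$. By Theorem~\ref{thm:first}, this set is nonempty, compact, invariant, and connected, and by Theorem~\ref{thm:second} it must fall into one of the three categories: (1) equilibria, (2) periodic orbits, or (3) heteroclinic or homoclinic orbits.

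Next I would eliminate cases (1) and (3) using the hypothesis that $R$ contains no equilibrium. Case (1) is ruled out immediately. For case (3), I would note that a heteroclinic or homoclinic orbit, by definition, approaches equilibrium points in forward and/or backward time; because $L_\omega(\vec y_0)$ is invariant and closed in $R$, the endpoints of any such orbit would have to lie in $R$ as well, forcing the existence of at least one equilibrium in $R$. Under the no-equilibrium hypothesis this is impossible. The only remaining possibility is case (2), which gives a limit cycle in $R$.

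The main subtlety, and the step I would want to justify most carefully, is the closure argument in case (3): one must argue that if $L_\omega(\vec y_0)$ contains a heteroclinic or homoclinic trajectory, then it also contains the equilibrium endpoints to which that trajectory asymptotes. This follows because $L_\omega(\vec y_0)$ is closed in the compact set $R$, and the limit of the heteroclinic trajectory as $t\to\pm\infty$ is itself a limit point of the sequence $\vec\Phi(t_k, \vec y_0)$ along a suitable subsequence $t_k\to\infty$. Once this is written out, the corollary is a one-line consequence of Theorem~\ref{thm:second}, and the remark in the introduction that this gives an alternative proof of the Poincar\'e--Bendixson theorem is then justified.
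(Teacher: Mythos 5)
Your proposal is correct and follows essentially the same route as the paper: both deduce the corollary from Theorem~\ref{thm:second} by observing that the no-equilibrium hypothesis rules out cases (1) and (3), leaving only a limit cycle. The only difference is that you spell out why case (3) fails---a heteroclinic or homoclinic orbit forces equilibria into the closed, invariant \ols---a justification the paper leaves implicit.
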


\begin{proof}
    If there is no equilibrium in $R$ then cases 1 and 3 in Theorem~\ref{thm:second} are not possible and the only possibility is case 2, indicating the existence of a limit cycle.
\end{proof}

\begin{remark}
    Note that Corollary~\ref{cor:second} is the well known Poincar\'e-Bendixon theorem, and Theorem~\ref{thm:second} leads to a different proof of this result.
\end{remark}

There are many physical examples in two dimensions. To mention a few, see the Van der Pol equation \citep{vdpol} model for an electrical network and the Brusselator model \citep{brusselator} for a chemical reaction.

\subsection{Possibilities with \texorpdfstring{$n=3$}{n=3} dimensions}

Here the compact and positively-invariant region $R$ is a simply or multiply {\em connected volume} in $\real^3$. 

In order to identify the \ols in $R$ we use the following {\em classification theorem} for orientable surfaces. 

\begin{theorem}[M\"obius theorem  \citep{mobius}]\label{thm:third}
     Every orientable surface is topologically equivalent to a sphere, a torus or a multiple torus (double, triple, etc.).
\end{theorem}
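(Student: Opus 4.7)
The plan is to prove the M\"obius classification by reducing an arbitrary compact, connected, orientable surface $S$ without boundary to a canonical model indexed by a non-negative integer $g$ (the genus), with $g=0$ giving the sphere, $g=1$ the torus, and $g\ge 2$ the $g$-fold multiple torus. First I would invoke Rad\'o's theorem to equip $S$ with a finite triangulation. Cutting $S$ open along a maximal tree of its $1$-skeleton (equivalently, collapsing a spanning tree of the dual graph) yields a single closed polygon $P$ in the plane whose boundary is a cyclic word in $2n$ letters, with each label appearing exactly twice. Orientability guarantees that each pair occurs once as $a$ and once as $a^{-1}$, and never twice with the same orientation.

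Next I would reduce this edge-word to canonical form through a finite sequence of cut-and-paste operations that preserve the homeomorphism type of the quotient: (i) cancel adjacent pairs $aa^{-1}$; (ii) consolidate all polygon vertices into a single equivalence class on $S$; (iii) isolate any pair $\{a,a^{-1}\}$ that is unlinked with every other pair, and eliminate it by a cancellation; (iv) group the remaining linked pairs into commutator blocks $a_i b_i a_i^{-1} b_i^{-1}$. The outcome is either the empty word (giving $S^2$) or the canonical orientable word $a_1 b_1 a_1^{-1} b_1^{-1}\cdots a_g b_g a_g^{-1} b_g^{-1}$. The single commutator $aba^{-1}b^{-1}$ is the familiar square identification producing the torus $T^2$, while the $g$-fold commutator polygon is, by construction, the connected sum of $g$ tori, i.e.\ the multiple torus of genus $g$. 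Distinctness of the resulting models follows from the Euler characteristic $\chi(S)=2-2g$, which is a topological invariant computable from any triangulation.

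The main obstacle will be step (iv) together with its companion step (ii): executing the cut-and-paste algorithm while tracking how each scissoring move rewrites the boundary word, and then verifying that orientability (used so far only as a symbolic constraint on letters) is precisely what forces the normal form into commutator blocks rather than the non-orientable blocks $a_i a_i$ characteristic of Klein-type surfaces. The bookkeeping — confirming after each move that the new polygon word still yields a quotient homeomorphic to $S$, and that the moves terminate in finitely many steps — is routine but lengthy, and is the technical heart of the classification.
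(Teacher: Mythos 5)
The paper offers no proof of Theorem~\ref{thm:third} at all: it is quoted as a classical result and attributed to M\"obius via a citation, so there is nothing internal to compare your argument against. What you have written is the standard combinatorial proof of the classification of surfaces (triangulate by Rad\'o, cut open to a single identification polygon, reduce the boundary word to the canonical form $a_1b_1a_1^{-1}b_1^{-1}\cdots a_gb_ga_g^{-1}b_g^{-1}$, and separate the resulting models by Euler characteristic $\chi = 2-2g$), and the outline is correct. Two points deserve attention if you were to write it out in full. First, the theorem as stated in the paper is false without the hypotheses \emph{compact, connected, and without boundary} (an open disk, an annulus, or a surface of infinite genus is orientable but homeomorphic to none of the listed models); you silently and correctly supply these hypotheses, but they should be made explicit, especially since the paper applies the theorem to an \ols that is compact and connected by Theorem~\ref{thm:first} but is not obviously a closed $2$-manifold. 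Second, your step (iii) is phrased slightly off from the usual argument: once all polygon vertices have been consolidated into a single equivalence class, one shows that a pair $a,a^{-1}$ of the first kind that is unlinked with every other pair \emph{cannot occur} (its occurrence would split the vertex classes), rather than that it can be eliminated by a further cancellation; the only cancellation move available is for adjacent pairs $aa^{-1}$. With that adjustment, and with the bookkeeping you acknowledge as the technical heart carried out, the argument is the complete and standard one. A separate editorial remark: the more serious issue in the paper is not the truth of Theorem~\ref{thm:third} but the unproved claim, in the proof of Theorem~\ref{thm:fourth}, that every compact connected invariant set in $\real^3$ that is not a point, a cycle, or a full subregion must be an orientable closed surface --- that is where the classification theorem is applied, and no proof of yours can repair that step.
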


Although there are more advanced classification theorems that include nonorientable surfaces \citep{francis_weeks}, the above M\"obius theorem is sufficient for our purpose.

\begin{remark}
    The \ols on a sphere is the same as the \ols in $R\subset\real^2$ given in Theorem~\ref{thm:second}, i.e., the Euler numbers for a sphere and for $\real^2$ are the same, specifically, the Euler number is 2. However, the Euler number of a torus is zero, allowing orbits covering it densely with no equilibrium and no limit cycle.
\end{remark}

\begin{theorem}\label{thm:fourth}
The \ols in a compact and positively invariant region $R\subset\real^3$ can include only 
\begin{enumerate}
    \item equilibrium points,
    \item periodic orbits (i.e., limit cycles),
    \item heteroclinic or homoclinic orbits, and
    \item a surface topologically equivalent to a torus (single or multiple).
\end{enumerate}
\end{theorem}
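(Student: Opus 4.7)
The plan is to follow the template of Theorem~\ref{thm:second}, extended from two to three dimensions by substituting the planar classification of compact connected sets with a three-dimensional one that exploits M\"obius's theorem (Theorem~\ref{thm:third}). By Theorem~\ref{thm:first}, the \ols of any trajectory in $R$ is nonempty, compact, invariant, and connected, so the task reduces to enumerating which compact, connected, flow-invariant sets can occur inside $R\subset\real^3$.

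First I would list the qualitatively distinct compact connected subsets of $R$: (a) isolated points, (b) closed one-dimensional cycles composed of regular points, (c) closed one-dimensional cycles that also pass through equilibria, (d) closed two-dimensional surfaces, and (e) a simply or multiply connected subvolume $R_1\subset R$. Cases (a)--(c) correspond, respectively, to the three items already appearing in Theorem~\ref{thm:second}, namely equilibria, limit cycles, and heteroclinic/homoclinic orbits, so their inclusion here is immediate once invariance is imposed. Case (e) is eliminated by the same nested-iteration argument used in Theorem~\ref{thm:second}: if the \ols fills a subvolume $R_1$, replace $R$ by $R_1$ and recurse, driving the dimension down until only the lower-dimensional cases remain.

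The new content lies in case (d). Here I would invoke Theorem~\ref{thm:third} to classify each connected invariant surface as topologically a sphere, torus, or multiple torus. The sphere case contributes nothing new: by the remark preceding the statement, its Euler number equals that of $\real^2$, forcing the intrinsic dynamics to obey Theorem~\ref{thm:second}, so that the only \ols{}s living on an invariant sphere are already captured by items 1--3. The torus and multiple-torus cases, whose vanishing Euler characteristic permits a nowhere-zero tangent field, supply item 4.

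The main obstacle will be twofold. The first difficulty is rigorously justifying that an invariant compact connected subset of $R\subset\real^3$ must fall into one of the listed categories (a)--(e); pathological constructions such as fractal or solenoid-like invariant sets are compact and connected but sit awkwardly between integer dimensions, and excluding them cleanly requires leaning on the smoothness of $\vec f$ together with invariance. The second difficulty is ruling out the nonorientable closed surfaces noted via the reference to \citet{francis_weeks}; I would argue that a two-dimensional invariant surface for a smooth flow in $\real^3$ inherits a transverse orientation from the ambient vector field, hence is itself orientable, which is precisely what is needed to bring Theorem~\ref{thm:third} to bear.
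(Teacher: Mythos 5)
Your proposal follows essentially the same route as the paper's proof: both reduce the problem via Theorem~\ref{thm:first} to classifying the compact, connected, invariant subsets of $R\subset\real^3$, enumerate the same cases (points, cycles of regular points, cycles containing equilibria, toroidal surfaces, subvolumes), eliminate the subvolume case by the same nested-iteration recursion used for Theorem~\ref{thm:second}, and identify the surviving cases with items 1--4. If anything you are more explicit than the paper, which never discusses the sphere case, orientability, or why the enumeration of compact connected invariant sets is exhaustive; the two difficulties you flag at the end (excluding fractal or solenoid-like invariant sets, and ruling out nonorientable surfaces) are genuine and are left equally unaddressed in the paper's own argument.
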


\begin{proof}
The \ols of a trajectory in $R\subset\real^3$ is {\em nonempty}, {\em compact}, {\em invariant} and {\em connected} (see Theorem~\ref{thm:first}).The compact and connected sets in $R\subset\real^3$ are 
\begin{enumerate}[label=\alph*)]
    \item points 
    \item closed cycles made of regular points, 
    \item closed cycles made of regular and irregular points,
    \item surfaces topologically equivalent to a torus (single or multiple), and
    \item a simply or multiply connected region $R_1\subset R$.
\end{enumerate}
However, in the case of e), by replacing $R$ by $R_1$, in $R_1$ we have cases a), b), c), d) and a region $R_2\subset\real^3$. Repeating this argument leads to the conclusion that the compact and connected sets in $R$ are a), b), c) and d). Since the \ols is also invariant, a), b), c) and d) are equivalent to 1, 2, 3 and 4 in the statement of the theorem.
\end{proof}

To give an example in $\real^3$, we chose the well-known Lorenz equations discussed in the next section.

\section{The Lorenz attractor}\label{sec:lorenz}

A three-dimensional problem in meteorology that has been intensively studied is the Lorenz system of equations  \citep{lorenz}, which are given by
\begin{equation}
        \frac{dx}{dt} = 10(y-x);\qquad
    \frac{dy}{dt} = rx - y - xz;\qquad
    \frac{dz}{dt} = -\frac{8}{3}z + xy\ .
\end{equation}
The Lorenz equations model a layer of the atmosphere that is warmer at the bottom than at the top. The variables $x$, $y$, and $z$ are related, respectively, to the intensity of the air motion, the temperature variations in the horizontal and in the vertical directions of the atmospheric layer. The parameter $r>0$ is proportional to the temperature difference between the bottom and the top of the atmospheric layer. If $0<r<1$ then the origin is the only equilibrium and it is a {\em stable node}.

At $r=1$, the system exhibits a {\em pitchfork bifurcation}, i.e., the origin $\mathcal O$ becomes unstable and two new stable nodes are formed for $r>1$ at 
\begin{align*}
    {\mathcal A} &= ( -\sqrt{\frac{8}{3}(r-1)},\  -\sqrt{\frac{8}{3}(r-1)},\  r-1 ) \\
    {\mathcal B} &= ( +\sqrt{\frac{8}{3}(r-1)},\  +\sqrt{\frac{8}{3}(r-1)},\  r-1 ) .
\end{align*}

For $1 < r < 24.737$, $\mathcal A$ and $\mathcal B$ are stable equilibrium points, and they are the \ols of the Lorenz flow. For $r > 24.737$, $\mathcal A$ and $\mathcal B$ become unstable  and a new \ols is formed, which for $r=28$ has been obtained using numerical integration by  \citet{lorenz}. For a detailed discussion see  \citet{sparrow2012lorenz}.

To obtain a compact and positively invariant region $R$ for this problem, let $R$ be the interior of the sphere
$$
c^2 = x^2 + y^2 + (z-38)^2\ .
$$
It follows that 
\begin{align*}
    \frac{d}{dt}c^2 &= 2x\frac{dx}{dt} + 2y\frac{dy}{dt} + 2(z-38)\frac{dz}{dt} \\
                    &= -\frac{16}{3}\left[z^2 - 38z + \frac{3}{8}(10x^2 + y^2)\right] \ .
\end{align*}
The ellipsoidal surface for which $\frac{dc^2}{dt}=0$ can be written
\begin{align*}
    1 &= \frac{x^2}{\left(\frac{8(19)^2}{30}\right)} + \frac{y^2}{\left(\frac{8(19)^2}{3}\right)} + \frac{(z-19)^2}{19^2}
\end{align*}
 Within the ellipsoid (Fig.~\ref{fig:lorenz_sign}), $\frac{dc^2}{dt}$ is positive, while it is negative outside. But for $c \gtrsim 40$ the surface for $\frac{dc^2}{dt}=0$ is fully enclosed within the sphere defining $R$, and so we conclude that $\frac{dc^2}{dt}<0$ for any point on the sphere. Hence $R$ is a {\em positively invariant region}. The \ols obtained numerically is shown in Fig.~\ref{fig:ols}.

\begin{figure}
\includegraphics[width=\columnwidth]{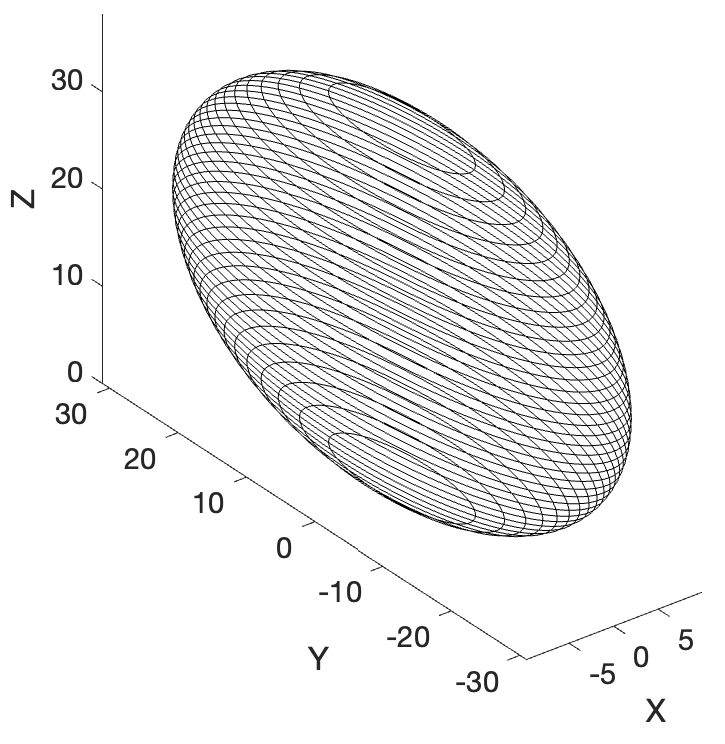}
\caption{\label{fig:lorenz_sign}  $\frac{dc^2}{dt}$ is negative outside (and positive inside) the depicted ellipsoid centered at $(0,0,19)$ and having semimajor axes $(\sqrt{\frac{2888}{30}}, \sqrt{\frac{2888}{3}}, 19) \simeq (9.8, 31.0, 19)$. }
\end{figure}

\begin{figure*}
\includegraphics[width=0.48\textwidth]{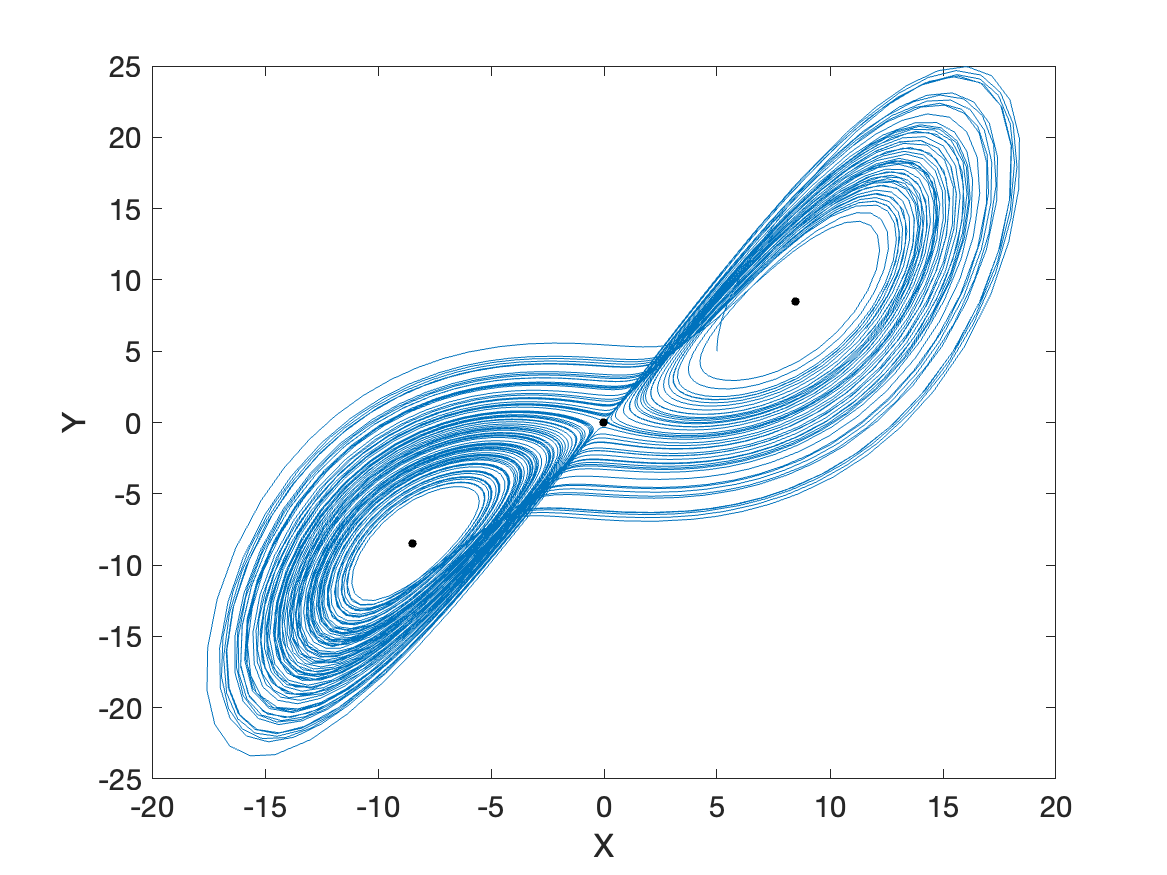}
\includegraphics[width=0.48\textwidth]{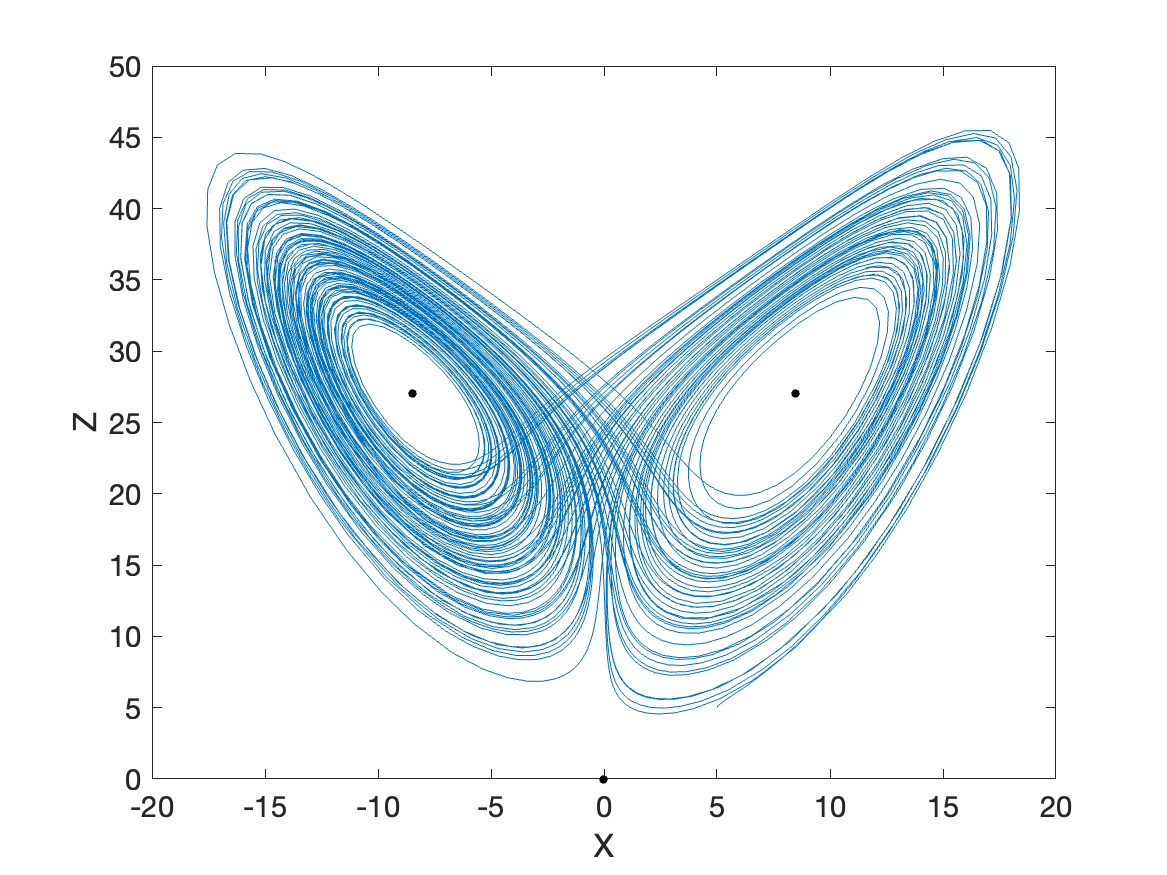}
\includegraphics[width=0.48\textwidth]{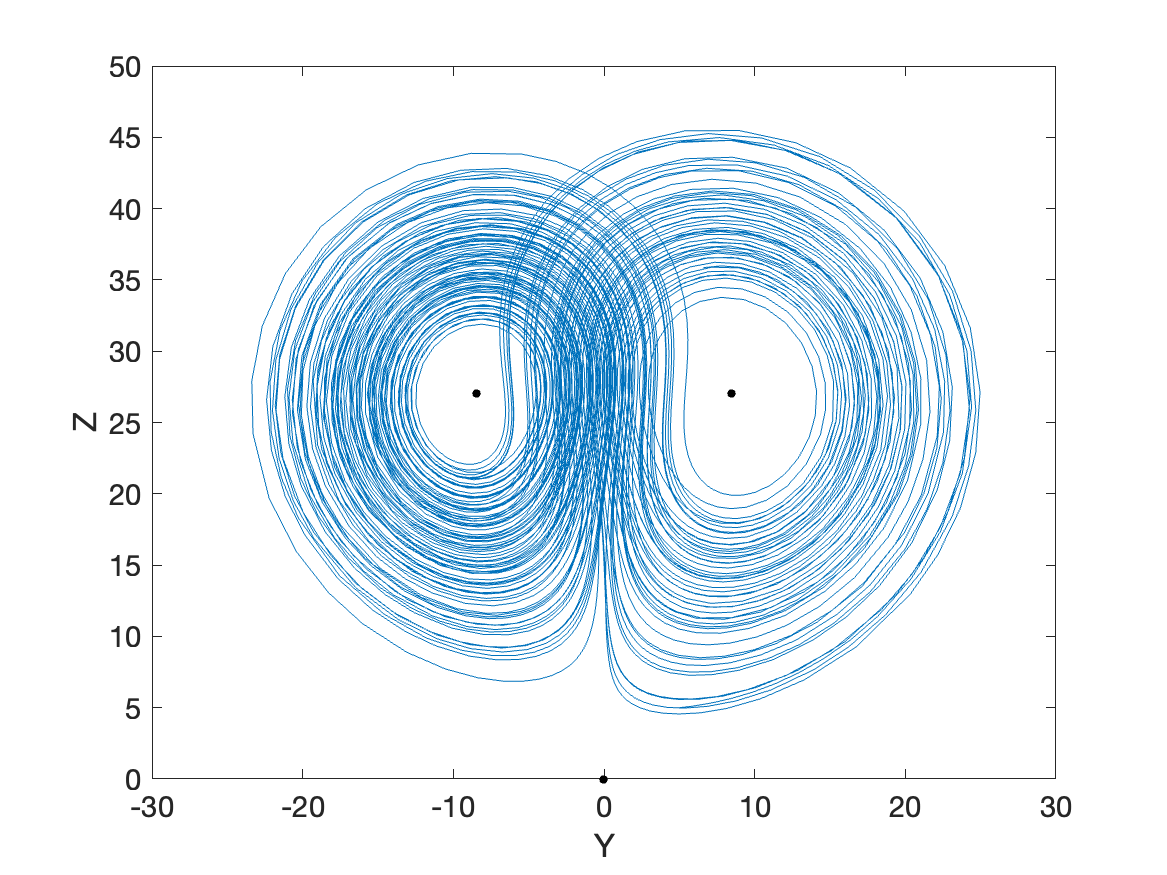}
\includegraphics[width=0.48\textwidth]{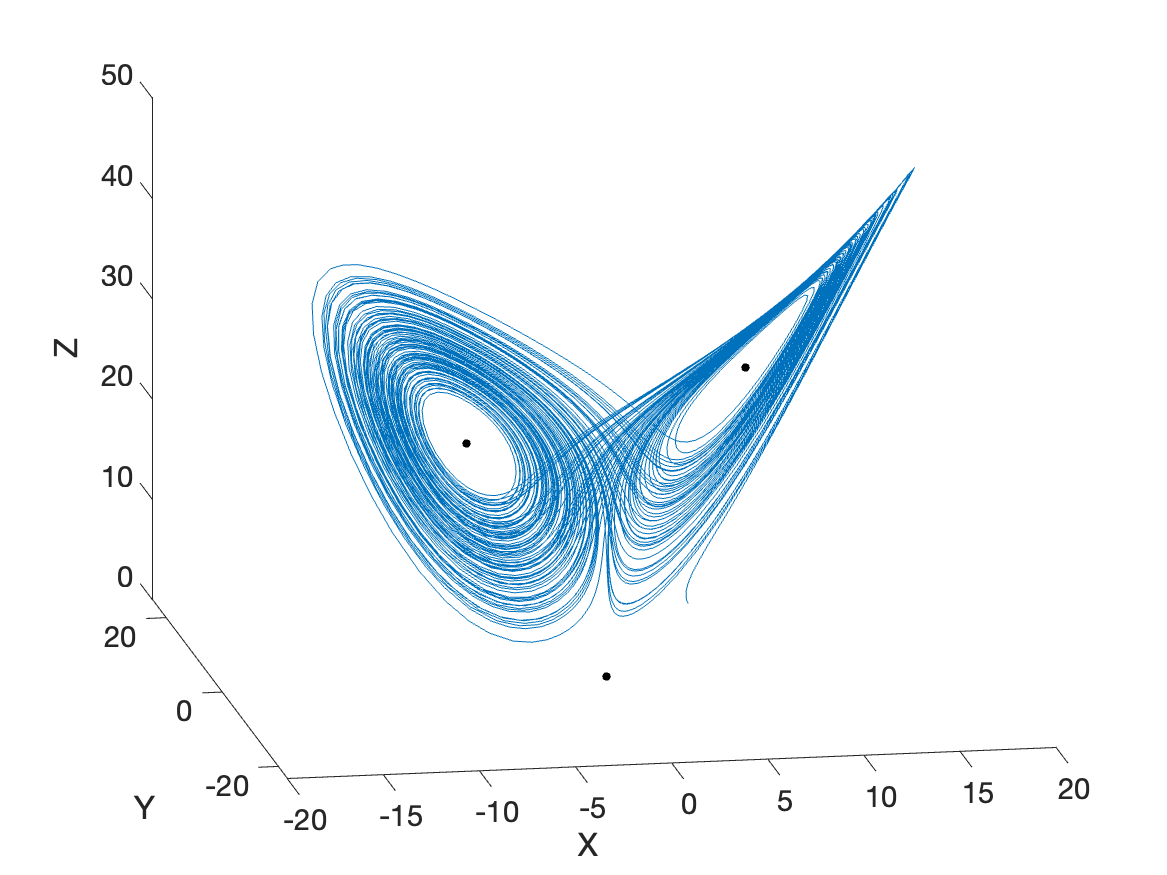}
\caption{\label{fig:ols} The \ols of the Lorenz flow using the initial condition $(x,y,z) = (5,5,5)$. Three two-dimensional projections are shown and an oblique three-dimensional representation. Equilibria $\mathcal A$, $\mathcal B$ and $\mathcal O$ are marked by black dots. }
\end{figure*}

We note that the equilibria $\mathcal A$, $\mathcal B$ and $\mathcal O$ are in $R$, but not on the \ols in Fig.~\ref{fig:ols}. We also observe that the numerically-obtained \ols shown in Fig.~\ref{fig:ols} resembles a twisted torus or a double torus. However, a double torus has an Euler number of -2, which implies it must have equilibrium points on it, but such equilibria do not exist. On the other hand, the Euler number of a twisted torus is zero, which allows no equilibria on it. This argument based on the Euler numbers leads to the conclusion that the \ols for the Lorenz attractor is a {\em twisted torus} 


With the exception of the eight stable manifolds of the equilibria---two each for $\mathcal A$ and $\mathcal B$  and four for $\mathcal O$---every other orbit in $R$ with arbitrary initial conditions approaches the \ols shown in Fig.~\ref{fig:ols}.

\begin{remark}
    The crossings of the $x=0$ plane have the property that the positions of consecutive crossing of the Lorenz orbit change regularly, while the times of consecutive crossing are chaotic.
\end{remark}



\section*{Acknowledgements}
The work of SRC was conducted at the Jet Propulsion Laboratory, California Institute of Technology, under a contract with the National Aeronautics and Space Administration. 

\nocite{*}
\bibliographystyle{aasjournal}
\bibliography{lorenz}

\end{document}